\documentclass[12pt]{amsart}

\usepackage{amssymb}
\usepackage[top=2cm,bottom=5cm,left=3cm,right=1cm]{geometry}
\usepackage[hidelinks]{hyperref}

\theoremstyle{plain}
\newtheorem{theorem}{Theorem}[section]
\newtheorem{lemma}[theorem]{Lemma}
\newtheorem{proposition}[theorem]{Proposition}

\theoremstyle{definition}

\theoremstyle{remark}

\newcommand{\CT}{\operatorname{CT}}
\newcommand{\Z}{\mathbb{Z}}

\title[Groups of Class Transpositions]
{Groups of Class Transpositions with Prescribed Prime Divisors of the Moduli}
\author{Alex Iskra}
\address{Sobolev Institute of Mathematics, 4 Acad. Koptyug avenue, 630090, Novosibirsk, Russia.}
\address{Novosibirsk State  University, 2 Pirogova Street, 630090, Novosibirsk, Russia.} 
\email{a.iskra@g.nsu.ru}
\date{\today}

\begin{document}
	
	\begin{abstract}
		For a set $\mathcal P$ of odd primes, let $\CT_{\mathcal P}(\Z)$
		denote the group generated by all class transpositions whose
		moduli have no odd prime divisors outside $\mathcal P$.
		We prove that
		\[
		\bigl\langle
		\CT_{\mathcal P_1}(\Z),\CT_{\mathcal P_2}(\Z)
		\bigr\rangle
		=
		\CT_{\mathcal P_1\cup\mathcal P_2}(\Z)
		\]
		for any sets $\mathcal P_1$ and $\mathcal P_2$ of odd primes.
		This gives a negative answer to Question~21.75
		in the Kourovka Notebook.
	\end{abstract}
	
	\maketitle
	
	\medskip
	\noindent\textbf{Keywords:}
	class transpositions, permutation groups, conjugation, perfect matchings.
	
	\section{Introduction}
	
	Class transpositions of the integers were introduced by S.~Kohl in 2010 \cite{K}. For $m\geqslant1$ and $0\leqslant a<m$, let	$a(m)=a+m\Z$ denote the residue class of $a$ modulo $m$. Two
	classes $a(m)$ and $b(n)$ are disjoint if and only if
	$a\not\equiv b\pmod{\gcd(m,n)}$.
	
	Let $a(m)$ and $b(n)$ be disjoint. The \textit{class transposition}
	associated with them is the permutation
	\[
	\tau_{a(m),b(n)}
	=
	\prod_{k\in\mathbb Z}
	(a+mk,b+nk) \in \operatorname{Sym}(\mathbb{Z}).
	\]
	
	The group generated by all class transpositions is denoted by $\CT(\Z)$.
	If $\mathcal P$ is a set of odd primes, then
	$\CT_{\mathcal P}(\Z)$ denotes the group generated by all class
	transpositions $\tau_{a(m),b(n)}$ for which every odd prime divisor
	of $mn$ belongs to $\mathcal P$. In particular, powers of two are
	allowed in the moduli for every $\mathcal P$.
	
	In 2026, the twenty-first edition of the Kourovka Notebook included
	Question~21.75 \cite{Kourovka}: \textit{given two distinct sets
	$\mathcal P_1$ and $\mathcal P_2$ of odd primes, neither of which
	is contained in the other, is it true that}
	\[
	\bigl\langle
	\CT_{\mathcal P_1}(\Z),\CT_{\mathcal P_2}(\Z)
	\bigr\rangle
	\lneq
	\CT_{\mathcal P_1\cup\mathcal P_2}(\Z)?
	\]
	It is clear that
	$\bigl\langle
	\CT_{\mathcal P_1}(\Z),\CT_{\mathcal P_2}(\Z)
	\bigr\rangle\leqslant\CT_{\mathcal P_1\cup\mathcal P_2}(\Z)$.
	In this paper, we prove that equality holds for arbitrary sets
	$\mathcal P_1$ and $\mathcal P_2$ of odd primes, thereby giving a
	negative answer to Question~21.75.
	
	The paper is organized as follows. Sections~\ref{sec:separated}
	and~\ref{sec:horizontal} treat separated and horizontal class transpositions.
	Section~\ref{sec:matching} proves a matching lemma using Hall's theorem.
	These results are combined in Section~\ref{sec:main-result}
	to prove the main theorem.

	\section{Class transpositions separated modulo a power of two}
	\label{sec:separated}
	
	Fix a power of two $R=2^k\geqslant4$. A class transposition
	$\tau_{a(m),b(n)}$ is called \textit{$R$-separated} if
	\[
	R\mid m,
	\qquad R\mid n,
	\qquad a\not\equiv b\pmod R.
	\]
	The last condition, in particular, ensures that the classes
	$a(m)$ and $b(n)$ are disjoint.
	
	\begin{lemma}
		\label{lem:remove-prime}
		Let the class transposition $\tau_{a(m),b(n)}$ be $R$-separated.
		Suppose that $q$ is an odd prime and $q\mid n$. Then there exists
		a class transposition $\tau_{b_0(Rq),c(R)}$ such that
		\begin{equation}\label{eq:remove-prime-conjugation}
		\tau_{b_0(Rq),c(R)}\tau_{a(m),b(n)}\tau_{b_0(Rq),c(R)}
		=\tau_{a(m),b'(n/q)}.
		\end{equation}
		The class transposition on the right-hand side is again
		$R$-separated.
	\end{lemma}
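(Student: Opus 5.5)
The plan is to use the standard conjugation rule: if $\sigma$ is a permutation of $\Z$, then $\sigma\tau_{a(m),b(n)}\sigma^{-1}$ swaps $\sigma(a+mk)$ with $\sigma(b+nk)$ for every $k$. So it suffices to pick an involution $\sigma=\tau_{b_0(Rq),c(R)}$ with two properties: it fixes $a(m)$ pointwise, and it maps each $b+nk$ to $b'+(n/q)k$.

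\emph{Choice of the parameters.} Since $R$ is a power of two, $q$ is odd, and both $R$ and $q$ divide $n$, we have $Rq\mid n$. Set $b_0\equiv b\pmod{Rq}$ with $0\leqslant b_0<Rq$, so that $b(n)\subseteq b_0(Rq)$. Because $R\geqslant4$, we can choose a residue $c$ modulo $R$ with $c\not\equiv a$ and $c\not\equiv b\pmod R$. Then:
\begin{itemize}
\item $c(R)$ and $b_0(Rq)$ are disjoint, since $b_0\equiv b\not\equiv c\pmod R$, so $\sigma$ is a genuine class transposition;
\item $a(m)$ is disjoint from $c(R)$, since $R\mid m$ and $a\not\equiv c\pmod R$;
\item $a(m)$ is disjoint from $b_0(Rq)$, since $R\mid\gcd(m,Rq)$ and $a\not\equiv b\pmod R$.
\end{itemize}
Hence $\sigma$ fixes $a(m)$ pointwise.

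\emph{Computing the image of $b(n)$.} The map $\sigma$ sends $b_0+Rqj$ to $c+Rj$. Write $b=b_0+Rqt$. Then
\[
b+nk=b_0+Rq\bigl(t+\tfrac{n}{Rq}k\bigr)\longmapsto c+Rt+\tfrac{n}{q}k .
\]
With $b'$ the residue of $c+Rt$ modulo $n/q$, this gives $\sigma(b+nk)=b'+(n/q)k$, with the indexing by $k$ preserved. Since $\sigma=\sigma^{-1}$, the conjugation rule yields \eqref{eq:remove-prime-conjugation}.

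\emph{$R$-separation of the result.} We have $R\mid m$, and $R\mid n/q$ because $q$ is odd. Moreover $b'\equiv c\not\equiv a\pmod R$. This last congruence also shows that $a(m)$ and $b'(n/q)$ are disjoint, so the right-hand side is a well-defined class transposition.

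The only delicate point is the bookkeeping that the affine map keeps the pairing index $k$ aligned, which is what makes the conjugate a class transposition. This works because $\sigma$ restricted to $b_0(Rq)$ is an increasing affine map. The choice of $c$ is where the hypothesis $R\geqslant4$ is used: we need a residue modulo $R$ avoiding both $a$ and $b$.
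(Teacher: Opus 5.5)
Your construction is the same as the paper's: $b_0$ is the least nonnegative residue of $b$ modulo $Rq$, $c$ avoids $a$ and $b$ modulo $R$, and you compute $b+nk\mapsto c+Rt+\frac nq k$. Your disjointness checks and the final $R$-separation argument are fine.

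\textbf{The gap.} It is the step where you let $b'$ be ``the residue of $c+Rt$ modulo $n/q$'' and then assert $\sigma(b+nk)=b'+\frac nq k$. By definition, $\tau_{a(m),b'(n/q)}$ uses the canonical representatives $0\leqslant a<m$ and $0\leqslant b'<n/q$, and it pairs $a+mk$ with $b'+\frac nq k$. Your conjugate pairs $a+mk$ with $c+Rt+\frac nq k$. Suppose $c+Rt=b'+\frac nq s$ with $s\neq0$. Then the conjugate pairs $a+mk$ with $b'+\frac nq(k+s)$ and sends $a$ to $b'+\frac nq s\neq b'$, so it is not $\tau_{a(m),b'(n/q)}$. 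You therefore need $c+Rt$ to be already reduced, i.e.\ $0\leqslant c+Rt<\frac nq$, and you never prove this.

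\textbf{The fix.} It is the range check the paper carries out. From $0\leqslant b<n$ and $0\leqslant b_0<Rq$ you get $0\leqslant t<\frac{n}{Rq}$. Taking $c$ explicitly in $\{0,\ldots,R-1\}$, this gives $0\leqslant c+Rt\leqslant (R-1)+R\bigl(\frac{n}{Rq}-1\bigr)=\frac nq-1$, so $b'=c+Rt$ exactly.

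Your closing justification, that $\sigma$ is increasing and affine on $b_0(Rq)$, does not supply this. The affine form yields the formula $c+Rt+\frac nq k$. It says nothing about whether $c+Rt$ is the least nonnegative representative of its class, and that is precisely what keeps the index $k$ aligned.
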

	
	\begin{proof}
		Since $R\mid n$, $q\mid n$, and $q$ is odd, the number $d=Rq$
		divides $n$. Divide $b$ by $d$ with remainder:
		\[
		b=b_0+dt,
		\qquad
		0\leqslant b_0<d.
		\]
		The inequality $0\leqslant b<n$ implies that
		\begin{equation}\label{eq:quotient-range}
		0\leqslant t<\frac nd.
		\end{equation}
		
		Since $a\not\equiv b\pmod R$ and $R\geqslant4$, we may choose
		$c\in\{0,\ldots,R-1\}$ such that
		\begin{equation}\label{eq:auxiliary-residue}
		c\not\equiv a\pmod R,
		\qquad
		c\not\equiv b\pmod R.
		\end{equation}
		Because $b_0\equiv b\pmod R$, the condition
		$a\not\equiv b\pmod R$ together with \eqref{eq:auxiliary-residue} shows that
		the classes
		\[
		a(m),\quad b_0(Rq),\quad c(R)
		\]
		are pairwise disjoint. Thus the class transposition
		$\tau_{b_0(Rq),c(R)}$ is well defined. It fixes the class $a(m)$
		pointwise.
		
		Let us compute the image of the class $b(n)$ under
		$\tau_{b_0(Rq),c(R)}$. For every $k\in\Z$, we have
		\begin{equation}\label{eq:class-image}
		\begin{aligned}
			b+nk
			&=b_0+Rq\left(t+\frac{n}{Rq}k\right)\\
			&\xrightarrow{\tau_{b_0(Rq),c(R)}}
			c+R\left(t+\frac{n}{Rq}k\right)\\
			&=(c+Rt)+\frac nq k.
		\end{aligned}
		\end{equation}
		Set $b'=c+Rt$. By \eqref{eq:quotient-range} and $0\leqslant c<R$, we obtain
		\[
		0\leqslant b'
		<R+R\left(\frac{n}{Rq}-1\right)
		=\frac nq.
		\]
		Since $\tau_{b_0(Rq),c(R)}$ fixes $a(m)$, it follows that
		\[
		\tau_{b_0(Rq),c(R)}\tau_{a(m),b(n)}\tau_{b_0(Rq),c(R)}
		=\tau_{a(m),b'(n/q)}.
		\]
		Finally,
		\[
		b'=c+Rt\equiv c\not\equiv a\pmod R,
		\]
		so the class transposition on the right remains $R$-separated.
	\end{proof}
	
	\begin{lemma}
		\label{lem:R-separated-in-H}
		Suppose that every odd prime divisor of $mn$ belongs to
		$\mathcal P_1\cup\mathcal P_2$. Then every $R$-separated class
		transposition $\tau_{a(m),b(n)}$ belongs to the group
		$\bigl\langle\CT_{\mathcal P_1}(\Z),
		\CT_{\mathcal P_2}(\Z)\bigr\rangle$.
	\end{lemma}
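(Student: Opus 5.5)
Write $H=\bigl\langle\CT_{\mathcal P_1}(\Z),\CT_{\mathcal P_2}(\Z)\bigr\rangle$. The plan is an induction that strips odd primes from the moduli using Lemma~\ref{lem:remove-prime}, until each modulus involves only primes from a single $\mathcal P_i$. The key observation is that the conjugating element $\tau_{b_0(Rq),c(R)}$ in Lemma~\ref{lem:remove-prime} has moduli $Rq$ and $R$. Its only odd prime divisor is $q$, so it lies in $\CT_{\mathcal P_1}(\Z)$ or in $\CT_{\mathcal P_2}(\Z)$ according to which set contains $q$. In either case it lies in $H$. Since conjugation is an involution here, $\tau_{a(m),b(n)}\in H$ if and only if $\tau_{a(m),b'(n/q)}\in H$.

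Given an $R$-separated $\tau_{a(m),b(n)}$, I would first apply Lemma~\ref{lem:remove-prime} repeatedly to the second class. Each step removes one odd prime factor (with multiplicity) from $n$ and keeps the transposition $R$-separated. After finitely many steps we reach $\tau_{a(m),b''(n_2)}$, where $n_2$ is the $2$-part of $n$ (still divisible by $R$). By symmetry, $\tau_{a(m),b(n)}=\tau_{b(n),a(m)}$, so the lemma applies with the roles of the classes swapped. Doing the same to $m$ yields $\tau_{a''(m_2),b''(n_2)}$ with both moduli powers of two. All the conjugators lie in $H$, and the moduli of the intermediate transpositions only ever lose primes from $mn$, so every prime used lies in $\mathcal P_1\cup\mathcal P_2$.

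The final transposition $\tau_{a''(m_2),b''(n_2)}$ has moduli that are powers of two. It therefore belongs to $\CT_{\mathcal P_1}(\Z)\subseteq H$, since powers of two are allowed for every $\mathcal P$. Undoing the chain of conjugations shows $\tau_{a(m),b(n)}\in H$.

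There is no real obstacle. The only care needed is the bookkeeping: each application of Lemma~\ref{lem:remove-prime} must be to an $R$-separated transposition whose modulus is still divisible by the prime $q$ being removed, and the lemma's conclusion guarantees this at every step. A formal induction on the number of odd prime factors of $mn$, counted with multiplicity, handles this cleanly.
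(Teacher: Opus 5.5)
Your proof is correct and takes essentially the paper's route: repeatedly conjugate by the prime-removing transpositions $\tau_{b_0(Rq),c(R)}$ of Lemma~\ref{lem:remove-prime}, which lie in your $H$ because $q$ is their only odd prime divisor. The only difference is minor: the paper strips just the primes in $\mathcal P_2\setminus\mathcal P_1$, so every conjugator lies in $\CT_{\mathcal P_2}(\Z)$ and the endpoint lies in $\CT_{\mathcal P_1}(\Z)$, whereas you strip every odd prime, taking each conjugator from whichever $\CT_{\mathcal P_i}(\Z)$ contains $q$ and ending at a transposition with $2$-power moduli.
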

	
	\begin{proof}
		Suppose that $q\in\mathcal P_2\setminus\mathcal P_1$ divides one
		of the moduli. Applying Lemma~\ref{lem:remove-prime} (with the two
		classes interchanged if necessary), we conjugate by a class
		transposition whose moduli are $Rq$ and $R$, thereby decreasing the
		exponent of $q$ in that modulus by one. This conjugating class
		transposition belongs to $\CT_{\mathcal P_2}(\Z)$, since $q$ is the
		only possible odd prime divisor of its moduli.
		
		Continue this process. After each step, both moduli are still
		divisible by $R$, since $q$ is odd, and the representatives of the
		two classes remain distinct modulo $R$. After finitely many steps,
		we remove from both moduli all prime divisors belonging to
		$\mathcal P_2\setminus\mathcal P_1$. The result is an
		$R$-separated class transposition $\tau_0$ such that every odd
		prime divisor of its moduli belongs to $\mathcal P_1$. Hence
		\[
		\tau_0\in\CT_{\mathcal P_1}(\Z)\leqslant \bigl\langle
		\CT_{\mathcal P_1}(\Z),\CT_{\mathcal P_2}(\Z)
		\bigr\rangle.
		\]
		The original class transposition is conjugate to $\tau_0$ by a
		product of elements of
		$\CT_{\mathcal P_2}(\Z)\leqslant\bigl\langle
		\CT_{\mathcal P_1}(\Z),\CT_{\mathcal P_2}(\Z)
		\bigr\rangle$. Therefore it also belongs to
		$\bigl\langle\CT_{\mathcal P_1}(\Z),
		\CT_{\mathcal P_2}(\Z)\bigr\rangle$.
	\end{proof}
	
	\section{Horizontal class transpositions}
	\label{sec:horizontal}
	
	A class transposition $\tau_{a(m),b(n)}$ is called \textit{horizontal}
	if $m=n$.
	\begin{lemma}
		\label{lem:horizontal}
		Suppose that $R\mid M$ and that every odd prime divisor of $M$
		belongs to $\mathcal P_1\cup\mathcal P_2$. Then, for any distinct
		$x,y\in\{0,\ldots,M-1\}$, one has
		\[
		\tau_{x(M),y(M)}\in \bigl\langle
		\CT_{\mathcal P_1}(\Z),\CT_{\mathcal P_2}(\Z)
		\bigr\rangle.
		\]
	\end{lemma}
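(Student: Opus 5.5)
If $x\not\equiv y\pmod R$, then $\tau_{x(M),y(M)}$ is $R$-separated, and Lemma~\ref{lem:R-separated-in-H} applies directly. So the only work is the case $x\equiv y\pmod R$. There I will write $\tau_{x(M),y(M)}$ as a conjugate of an $R$-separated horizontal transposition by another $R$-separated horizontal transposition of modulus $M$; both factors then lie in $\bigl\langle\CT_{\mathcal P_1}(\Z),\CT_{\mathcal P_2}(\Z)\bigr\rangle$ by Lemma~\ref{lem:R-separated-in-H}.

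Concretely, assume $x\equiv y\pmod R$ and $x\neq y$. Since $R\geqslant 4$ divides $M$, I choose $z\in\{0,\ldots,M-1\}$ with $z\not\equiv x\pmod R$; for instance $z=x+1$ reduced modulo $M$ works. Then $z\neq x$ and $z\neq y$. The classes $x(M)$, $y(M)$, $z(M)$ are pairwise distinct residue classes modulo the same modulus, hence pairwise disjoint. Both $\tau_{x(M),z(M)}$ and $\tau_{y(M),z(M)}$ are $R$-separated, because $z\not\equiv x\equiv y\pmod R$. Their moduli equal $M$, whose odd prime divisors lie in $\mathcal P_1\cup\mathcal P_2$, so both belong to the group by Lemma~\ref{lem:R-separated-in-H}.

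It remains to check the identity
\[
\tau_{x(M),z(M)}\,\tau_{y(M),z(M)}\,\tau_{x(M),z(M)}=\tau_{x(M),y(M)}.
\]
Horizontal transpositions with the common modulus $M$ act on each "level" $\{x+Mk,\,y+Mk,\,z+Mk\}$, for fixed $k$, as ordinary transpositions in $\operatorname{Sym}(3)$. Points outside $x(M)\cup y(M)\cup z(M)$ are fixed by all three maps. So the identity reduces levelwise to the relation $(x\,z)(y\,z)(x\,z)=(x\,y)$ in $\operatorname{Sym}(\{x,y,z\})$, which is a direct check.

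No real obstacle is expected. The only points needing care are that $z$ exists, which uses $R\geqslant 3$ so that a residue modulo $R$ different from $x$ is available, and that the levelwise identification is legitimate. The latter holds because all three maps have the same modulus $M$, so each preserves the index $k$ in $x+Mk\mapsto z+Mk$.
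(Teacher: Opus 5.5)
Your proposal is correct and is essentially the paper's own proof: the same case split on whether $x\equiv y\pmod R$, the same auxiliary class $z(M)$ with $z\not\equiv x\pmod R$, and the same identity $\tau_{x(M),y(M)}=\tau_{x(M),z(M)}\tau_{y(M),z(M)}\tau_{x(M),z(M)}$ with both factors handled by Lemma~\ref{lem:R-separated-in-H}. The only difference is cosmetic: you take $z=x+1$ reduced modulo $M$ instead of $z\in\{0,\ldots,R-1\}$, and the existence of such a $z$ needs only $R\geqslant 2$, not $R\geqslant 3$.
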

	
	\begin{proof}
		If $x\not\equiv y\pmod R$, apply
		Lemma~\ref{lem:R-separated-in-H}.
		
		Now suppose that $x\equiv y\pmod R$. Choose
		$z\in\{0,\ldots,R-1\}$ such that $z\not\equiv x\pmod R$. Since
		$R\mid M$, we have $0\leqslant z<M$. The classes
		$x(M),y(M),z(M)$ are pairwise distinct and hence pairwise
		disjoint. Therefore
		\begin{equation}\label{eq:horizontal-conjugation}
		\tau_{x(M),y(M)}
		=
		\tau_{x(M),z(M)}
		\tau_{y(M),z(M)}
		\tau_{x(M),z(M)}.
		\end{equation}
		Each factor on the right is $R$-separated and hence belongs to
		$\bigl\langle\CT_{\mathcal P_1}(\Z),
		\CT_{\mathcal P_2}(\Z)\bigr\rangle$ by
		Lemma~\ref{lem:R-separated-in-H}.
	\end{proof}
	
	\section{A matching lemma}
	\label{sec:matching}
	
	A \textit{matching} in a graph is a set of pairwise nonincident edges.
	It covers a vertex if that vertex is incident with one of its edges,
	and it is \textit{perfect} if it covers every vertex of the graph. For a
	bipartite graph with parts $X$ and $Y$ and a subset $S\subseteq X$,
	let $N(S)\subseteq Y$ denote the set of all vertices adjacent to at
	least one vertex of $S$. We recall Hall's classical marriage theorem.
	
	\begin{theorem}[Hall]
		Let $\Gamma$ be a finite bipartite graph with parts $X$ and $Y$.
		There exists a matching that covers every vertex in $X$ if and only
		if
		\[
		|N(S)|\geqslant |S|
		\]
		for every subset $S\subseteq X$. In particular, if $|X|=|Y|$ and
		this condition holds, then $\Gamma$ has a perfect matching.
	\end{theorem}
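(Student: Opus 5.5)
Necessity is immediate: if a matching covers every vertex of $X$, the partners of the vertices in $S$ are $|S|$ distinct vertices of $N(S)$, so $|N(S)|\geqslant|S|$. The last sentence of the statement also follows at once: when $|X|=|Y|$, a matching covering $X$ has $|X|=|Y|$ edges, so it covers $Y$ as well and is perfect. The work is in sufficiency, which I will prove by strong induction on $|X|$. The case $|X|=1$ is trivial, since $N(\{x\})\neq\varnothing$ gives an edge at $x$.

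For the inductive step I split into two cases according to whether Hall's condition has slack.

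\emph{Case 1:} $|N(S)|\geqslant|S|+1$ for every nonempty proper subset $S\subsetneq X$. I pick any $x\in X$ and any neighbour $y$ of $x$, and delete both $x$ and $y$. In the resulting graph, every nonempty $S\subseteq X\setminus\{x\}$ has at least $|N(S)|-1\geqslant|S|$ neighbours. Induction then gives a matching covering $X\setminus\{x\}$, and I add the edge $xy$ to it.

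\emph{Case 2:} some nonempty proper subset $S_0\subsetneq X$ is tight, that is, $|N(S_0)|=|S_0|$.
\begin{itemize}
\item The subgraph induced on $S_0\cup N(S_0)$ satisfies Hall's condition, because the neighbourhoods of subsets of $S_0$ lie entirely in $N(S_0)$. Induction gives a matching $M_1$ covering $S_0$.
\item Next consider the subgraph on $(X\setminus S_0)\cup(Y\setminus N(S_0))$. For $T\subseteq X\setminus S_0$, I need the count
\[
|N(T)\setminus N(S_0)| = |N(T\cup S_0)|-|N(S_0)| \geqslant |T|+|S_0|-|S_0| = |T|.
\]
So Hall's condition holds there, and induction gives a matching $M_2$ covering $X\setminus S_0$.
\item The edges of $M_1$ and $M_2$ use disjoint vertex sets, so $M_1\cup M_2$ is the required matching covering $X$.
\end{itemize}

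The only delicate point is the counting identity in Case~2. It relies on $N(T\cup S_0)$ being the disjoint union of $N(S_0)$ and $N(T)\setminus N(S_0)$, together with the tightness of $S_0$. Everything else is routine bookkeeping.
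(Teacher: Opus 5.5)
Your proof is correct. It is the standard Halmos--Vaughan induction, and the delicate steps check out. In Case~1, every nonempty $S\subseteq X\setminus\{x\}$ is a proper subset of $X$, so the slack $|N(S)|\geqslant|S|+1$ absorbs the deletion of $y$. In Case~2, $N(T)\setminus N(S_0)=N(T\cup S_0)\setminus N(S_0)$ with $T\cap S_0=\varnothing$, and $M_1$ and $M_2$ use disjoint vertex sets.

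There is no route to compare against, because the paper gives no proof of this statement. It only recalls Hall's theorem as a classical result. It then uses just the sufficiency direction, in the case $|X|=|Y|=R$, to produce the permutation $\pi$ in Lemma~\ref{lem:matching}. Your argument makes that step self-contained; the necessity direction and the case $|X|\neq|Y|$ are not needed for the paper's purposes.
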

	
	\begin{lemma}
		\label{lem:matching}
		Let $a,b\in\Z$, $m,n\in\mathbb N$, and $R\in\mathbb N$. If
		\begin{equation}\label{eq:matching-condition}
		\gcd(m,R)+\gcd(n,R)\leqslant R,
		\end{equation}
		then there exists a permutation $\pi$ of the set
		$\{0,\ldots,R-1\}$ such that
		\begin{equation}\label{eq:matching-permutation}
		a+mj\not\equiv b+n\pi(j)\pmod R
		\qquad(0\leqslant j<R).
		\end{equation}
	\end{lemma}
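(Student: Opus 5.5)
We will apply Hall's theorem to a bipartite graph $\Gamma$ with parts $X=\{0,\ldots,R-1\}$ (indices $j$) and $Y=\{0,\ldots,R-1\}$ (indices $i$). We join $j$ to $i$ exactly when
\[
a+mj\not\equiv b+ni\pmod R .
\]
A perfect matching of $\Gamma$ is the same thing as a permutation $\pi$ satisfying \eqref{eq:matching-permutation}. Since $|X|=|Y|=R$, it suffices to verify Hall's condition $|N(S)|\geqslant|S|$ for every $S\subseteq X$.

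We first record the structure of the residues. Put $g=\gcd(m,R)$ and $h=\gcd(n,R)$. As $j$ runs over $\{0,\ldots,R-1\}$, the residue $a+mj \bmod R$ runs over the coset $a+g\Z$ modulo $R$. This coset has $R/g$ elements, and each is hit exactly $g$ times. Similarly, $b+ni \bmod R$ takes each of the $R/h$ values of $b+h\Z$ exactly $h$ times. The key observation is that a vertex $j\in X$ is nonadjacent only to those $i$ with $b+ni\equiv a+mj\pmod R$. There are at most $h$ such $i$: either $0$ or exactly $h$, depending on whether $a+mj$ lies in the coset $b+h\Z$. Dually, each $i\in Y$ is nonadjacent to at most $g$ vertices of $X$.

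To check Hall's condition, we split into two cases according to the size of $S\neq\varnothing$.
\begin{itemize}
\item If $|S|\leqslant R-h$, pick any $j\in S$. Then $|N(S)|\geqslant|N(\{j\})|\geqslant R-h\geqslant|S|$.
\item If $|S|>R-h$, then by \eqref{eq:matching-condition} we have $|S|>R-h\geqslant g$. Every $i\in Y$ has at most $g$ non-neighbours in $X$, so $i$ cannot be nonadjacent to all of $S$. Hence every $i\in Y$ lies in $N(S)$, and $|N(S)|=R\geqslant|S|$.
\end{itemize}
In both cases Hall's condition holds, so Hall's theorem yields a perfect matching, and hence the desired $\pi$.

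The main point requiring care is the counting claim that each $j$ has at most $h$ non-neighbours and each $i$ at most $g$. This reduces to the standard fact that the map $i\mapsto ni \bmod R$ on $\Z/R\Z$ has image $h\Z/R\Z$ and fibres of size $h$, and the analogous statement for $m$.
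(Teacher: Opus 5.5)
Your proof is correct and follows the paper's route: Hall's theorem on the same bipartite graph, with the hypothesis $\gcd(m,R)+\gcd(n,R)\leqslant R$ supplying the needed inequality. The only difference is how Hall's condition is checked. You split according to whether $|S|\leqslant R-\gcd(n,R)$ and use only the non-neighbour bounds ($\gcd(n,R)$ per left vertex, $\gcd(m,R)$ per right vertex), whereas the paper splits according to whether the residues $a+mj$, $j\in S$, are all equal, using that a right vertex is non-adjacent exactly to the left vertices sharing its residue; both versions are complete.
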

	
	\begin{proof}
		For every $c\in\Z/R\Z$, set
		\[
		\begin{aligned}
			\alpha_c
			&=\bigl|\{j\in\{0,\ldots,R-1\}:a+mj\equiv c\pmod R\}\bigr|,\\
			\beta_c
			&=\bigl|\{j\in\{0,\ldots,R-1\}:b+nj\equiv c\pmod R\}\bigr|.
		\end{aligned}
		\]
		In particular,
		\[
		\alpha_c\leqslant\gcd(m,R),
		\qquad
		\beta_c\leqslant\gcd(n,R).
		\]
		By \eqref{eq:matching-condition}, we obtain
		\begin{equation}\label{eq:residue-multiplicities}
		\alpha_c+\beta_c\leqslant R
		\qquad(c\in\Z/R\Z).
		\end{equation}
		
		Construct a bipartite graph whose left and right parts are two
		copies of $\{0,\ldots,R-1\}$. Join a left vertex $j$ to a right
		vertex $k$ if and only if
		\[
		a+mj\not\equiv b+nk\pmod R.
		\]
		We verify Hall's condition.
		
		Let $S$ be a set of left vertices. Hall's condition is immediate
		for $S=\varnothing$, so assume that $S\ne\varnothing$. If the
		values $a+mj\pmod R$, $j\in S$, include at least two distinct
		residues, then every right vertex is adjacent to at least one
		vertex in $S$. Indeed, the residue corresponding to a right vertex
		cannot coincide with two distinct residues simultaneously. Hence,
		in this case, $N(S)$ is the entire right part and
		\[
		|N(S)|=R\geqslant|S|.
		\]
		
		If, on the other hand, all vertices in $S$ have the same residue
		$c$, then $|S|\leqslant\alpha_c$, while the neighbors of $S$ are
		precisely the right vertices whose residues differ from $c$.
		Consequently, using \eqref{eq:residue-multiplicities}, we obtain
		\[
		|N(S)|=R-\beta_c\geqslant\alpha_c\geqslant|S|.
		\]
		Thus Hall's condition holds for every $S$. The graph therefore has
		a perfect matching. The corresponding bijection
		$j\mapsto\pi(j)$ satisfies \eqref{eq:matching-permutation}.
	\end{proof}
	
	\section{The main theorem}
	\label{sec:main-result}
	
	The following proposition was proved in \cite{B}.
	\begin{proposition} \label{propeq}
		Let $\tau_{a(m),b(n)}$ be a class transposition. Then:
		\begin{enumerate}
			\item the permutation
			$\tau_{(a+mj)(Km),(b+nj)(Kn)}$ is a class transposition for all
			integers $j$ and $K$ such that $0 \leqslant j\leqslant K-1$;
			\item for every positive integer $K$, one has
			\[
			\tau_{a(m),b(n)}=
			\prod\limits_{j=0}^{K-1}
			\tau_{(a+mj)(Km),(b+nj)(Kn)}.
			\]
		\end{enumerate}
	\end{proposition}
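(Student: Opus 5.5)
We prove both parts directly from the definition $\tau_{a(m),b(n)}=\prod_{k\in\Z}(a+mk,b+nk)$. The idea is to split the index $k$ according to its residue modulo $K$. Each residue class of $k$ then produces exactly one of the refined class transpositions, and these refined transpositions have pairwise disjoint supports.

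For part (1), fix $K\geqslant1$ and $0\leqslant j\leqslant K-1$. First we check that the representatives are normalized. Since $0\leqslant a<m$, we have $0\leqslant a+mj<m+m(K-1)=Km$, and in the same way $0\leqslant b+nj<Kn$. Next, we have the inclusions
\[
(a+mj)(Km)\subseteq a(m),
\qquad
(b+nj)(Kn)\subseteq b(n).
\]
The classes $a(m)$ and $b(n)$ are disjoint, so the two refined classes are disjoint as well. Hence $\tau_{(a+mj)(Km),(b+nj)(Kn)}$ is a well-defined class transposition.

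For part (2), write each integer $k$ uniquely as $k=j+Kl$ with $0\leqslant j\leqslant K-1$ and $l\in\Z$. The transposition $(a+mk,b+nk)$ appearing in $\tau_{a(m),b(n)}$ then becomes
\[
\bigl((a+mj)+Km\,l,\;(b+nj)+Kn\,l\bigr).
\]
This is precisely the $l$-th transposition in the definition of $\tau_{(a+mj)(Km),(b+nj)(Kn)}$. Therefore the infinite product defining $\tau_{a(m),b(n)}$ regroups as a product over $j$ of the products defining the refined class transpositions.

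The only point that needs care is that this regrouping is legitimate, i.e.\ that the factors act on pairwise disjoint sets and hence commute. The support of the $j$-th factor is $(a+mj)(Km)\cup(b+nj)(Kn)$.
\begin{itemize}
\item For distinct $j,j'\in\{0,\ldots,K-1\}$, the classes $(a+mj)(Km)$ and $(a+mj')(Km)$ are distinct residue classes modulo $Km$, so they are disjoint. The same argument shows that $(b+nj)(Kn)$ and $(b+nj')(Kn)$ are disjoint.
\item Any class of the form $(a+mj)(Km)$ lies in $a(m)$ and any class of the form $(b+nj')(Kn)$ lies in $b(n)$. These are disjoint, so the mixed pairs are disjoint as well.
\end{itemize}
Thus the $K$ factors are involutions with pairwise disjoint supports. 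Their product is independent of order and agrees with $\tau_{a(m),b(n)}$ at every point, which gives the claimed identity.
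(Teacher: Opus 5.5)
Your proof is correct and complete. The paper gives no argument of its own here; it only says the proposition ``was proved in \cite{B}''. So there is no in-paper route to compare against. What you supply is the natural self-contained verification: you start from the definition $\tau_{a(m),b(n)}=\prod_{k\in\Z}(a+mk,b+nk)$ and split $k=j+Kl$.

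Two parts of your write-up are worth keeping, because the paper relies on them tacitly.

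First, the normalization check $0\leqslant a+mj<Km$ and $0\leqslant b+nj<Kn$ in part (1) is not a formality. With the paper's convention, the product $\prod_{k}(a+mk,b+nk)$ depends on the integers $a,b$ and not only on the residue classes. Your check is therefore what identifies each regrouped product $\prod_{l}(a+mj+Kml,\,b+nj+Knl)$ with the class transposition $\tau_{(a+mj)(Km),(b+nj)(Kn)}$ in the paper's normalized sense.

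Second, your disjoint-support argument covers distinct classes modulo $Km$, distinct classes modulo $Kn$, and the mixed pairs via the containments in $a(m)$ and $b(n)$. This is exactly what justifies a claim the paper makes without proof in the proof of Theorem~\ref{thm:main}: that the factors in \eqref{eq:refinement} have pairwise disjoint supports.
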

	
	\begin{theorem}
		\label{thm:main}
		For any sets $\mathcal P_1$ and $\mathcal P_2$ of odd primes, one
		has
		\[
		\bigl\langle
		\CT_{\mathcal P_1}(\Z),\CT_{\mathcal P_2}(\Z)
		\bigr\rangle
		=
		\CT_{\mathcal P_1\cup\mathcal P_2}(\Z).
		\]
	\end{theorem}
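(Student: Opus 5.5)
The inclusion $\bigl\langle\CT_{\mathcal P_1}(\Z),\CT_{\mathcal P_2}(\Z)\bigr\rangle\leqslant\CT_{\mathcal P_1\cup\mathcal P_2}(\Z)$ is clear. For the reverse inclusion, it suffices to show that every generator $\tau_{a(m),b(n)}$ of $\CT_{\mathcal P_1\cup\mathcal P_2}(\Z)$ lies in $H=\bigl\langle\CT_{\mathcal P_1}(\Z),\CT_{\mathcal P_2}(\Z)\bigr\rangle$. The plan is to refine $\tau_{a(m),b(n)}$ via Proposition~\ref{propeq} into a product of class transpositions, all of whose moduli are divisible by a suitable power of two $R$, and then to rearrange the factors using the matching lemma so that every factor becomes $R$-separated. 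At that point Lemma~\ref{lem:R-separated-in-H} finishes the argument. Horizontal transpositions from Lemma~\ref{lem:horizontal} serve as the permutations that carry out the rearrangement.

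Concretely, I would choose $R=2^k\geqslant4$ with $\gcd(m,R)+\gcd(n,R)\leqslant R$. Taking $2^k$ larger than twice the $2$-parts of $m$ and $n$ works, since then each gcd is at most $R/2$ and the two are not both equal to $R/2$ unless equality already holds, which is fine. By Proposition~\ref{propeq} with $K=R$,
\[
\tau_{a(m),b(n)}=\prod_{j=0}^{R-1}\tau_{(a+mj)(Rm),(b+nj)(Rn)}.
\]
These factors have pairwise disjoint supports and commute, and each modulus is divisible by $R$. The representatives, however, may coincide modulo $R$.

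To fix this, let $\pi$ be the permutation given by Lemma~\ref{lem:matching}. Let $\sigma$ be the permutation of $\Z$ that maps each class $(b+nk)(Rn)$ onto $(b+n\pi(k))(Rn)$ by translation, for $0\leqslant k<R$, and fixes everything outside $b(n)$. This $\sigma$ acts on $b(n)\cong\Z$ as the permutation of residues modulo $R$ induced by $\pi$. Hence $\sigma$ is a product of transpositions of the form $\tau_{(b+nk)(Rn),(b+nl)(Rn)}$, which are horizontal with modulus $M=Rn$. Since $R\mid Rn$ and the odd primes dividing $Rn$ lie in $\mathcal P_1\cup\mathcal P_2$, these transpositions lie in $H$ by Lemma~\ref{lem:horizontal}. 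Conjugating by $\sigma$, which fixes $a(m)$ pointwise, gives
\[
\sigma\,\tau_{a(m),b(n)}\,\sigma^{-1}=\prod_{j}\tau_{(a+mj)(Rm),(b+n\pi(j))(Rn)}.
\]
By the choice of $\pi$, each factor is $R$-separated, so each lies in $H$ by Lemma~\ref{lem:R-separated-in-H}. Therefore $\tau_{a(m),b(n)}\in H$.

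The main obstacle is verifying that the conjugation identity is exact. One must check that $\sigma$ maps each refined class $(b+nk)(Rn)$ onto $(b+n\pi(k))(Rn)$ by the affine map that matches the pairing $b+nk+Rnt\leftrightarrow a+mk+Rmt$. Only then does the conjugate equal the stated product of class transpositions, with the same index $t$ on both sides. A horizontal transposition $\tau_{x(M),y(M)}$ pairs $x+Mt$ with $y+Mt$, so it is a translation, and compositions of translations between classes of the same modulus preserve $t$; this gives the required matching. A smaller point is making sure the representatives lie in $[0,Rn)$, so that Lemma~\ref{lem:horizontal} applies with $0\leqslant x,y<M$.
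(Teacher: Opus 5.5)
Your proposal is correct and follows the paper's proof: you use the same refinement via Proposition~\ref{propeq}, the same matching permutation $\pi$, and the same product $U=\prod_j\tau_{A_j,B_{\pi(j)}}$ of $R$-separated factors, re-paired using horizontal transpositions. The only cosmetic difference is packaging. You write $T=\sigma^{-1}U\sigma$, where $\sigma$ is a product of horizontal transpositions acting only on $b(n)$. The paper instead writes $T=VU$ with $V=TU$, a product of horizontal transpositions acting on both $a(m)$ and $b(n)$.
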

	
	\begin{proof}
		Consider an arbitrary generator
		\begin{equation}\label{eq:generator}
		T=\tau_{a(m),b(n)}
		\in\CT_{\mathcal P_1\cup\mathcal P_2}(\Z).
		\end{equation}
		
		Choose $k$ sufficiently large that $R=2^k$ satisfies
		\begin{equation}\label{eq:power-of-two}
		R\geqslant4,
		\qquad
		\gcd(m,R)+\gcd(n,R)\leqslant R.
		\end{equation}
		
		By Proposition~\ref{propeq},
		\begin{equation}\label{eq:refinement}
		T=\prod_{j=0}^{R-1}
		\tau_{(a+mj)(Rm),(b+nj)(Rn)}.
		\end{equation}
		Set
		\[
		A_j=(a+mj)(Rm),
		\qquad
		B_j=(b+nj)(Rn).
		\]
		All factors in \eqref{eq:refinement} have pairwise disjoint supports.
		
		By Lemma~\ref{lem:matching}, there exists a permutation
		$\pi\in S_R$ such that
		\begin{equation}\label{eq:separated-pairing}
		a+mj\not\equiv b+n\pi(j)\pmod R
		\qquad(0\leqslant j<R).
		\end{equation}
		Consider the permutation
		\begin{equation}\label{eq:permutation-U}
		U=\prod_{j=0}^{R-1}\tau_{A_j,B_{\pi(j)}}.
		\end{equation}
		By \eqref{eq:separated-pairing}, every class transposition in this product is
		$R$-separated. Its moduli are $Rm$ and $Rn$, and their odd prime
		divisors belong to $\mathcal P_1\cup\mathcal P_2$. Therefore, by
		Lemma~\ref{lem:R-separated-in-H}, all factors in \eqref{eq:permutation-U}
		belong to
		$\bigl\langle\CT_{\mathcal P_1}(\Z),
		\CT_{\mathcal P_2}(\Z)\bigr\rangle$. Hence
		$U\in\bigl\langle\CT_{\mathcal P_1}(\Z),
		\CT_{\mathcal P_2}(\Z)\bigr\rangle$.
		
		We adopt the convention that, in a product of permutations, the
		rightmost factor acts first. Set $V=TU$. We now describe the action
		of $V$. If
		\[
		x=a+mj+Rm\ell\in A_j,
		\]
		then
		\[
		x\stackrel{U}{\longmapsto}
		b+n\pi(j)+Rn\ell
		\stackrel{T}{\longmapsto}
		a+m\pi(j)+Rm\ell.
		\]
		Consequently,
		\[
		V\colon A_j\longrightarrow A_{\pi(j)}
		\]
		with the parameter $\ell$ unchanged. Similarly, if $x\in B_j$,
		then
		\[
		V\colon B_j\longrightarrow B_{\pi^{-1}(j)}
		\]
		with the parameter unchanged. Outside $a(m)\cup b(n)$, both $T$
		and $U$ act as the identity.
		
		Write the permutation $\pi\in S_R$ as a product of transpositions:
		\[
		\pi=(i_1\,j_1)\cdots(i_t\,j_t).
		\]
		The action of $V$ described above therefore gives
		\begin{equation}\label{eq:horizontal-factorization}
		V=
		\bigl(
		\tau_{A_{i_1},A_{j_1}}\cdot\ldots\cdot
		\tau_{A_{i_t},A_{j_t}}
		\bigr)
		\bigl(
		\tau_{B_{i_t},B_{j_t}}\cdot\ldots\cdot
		\tau_{B_{i_1},B_{j_1}}
		\bigr).
		\end{equation}
		Indeed, the first factor induces the permutation $\pi$ on the
		classes $A_0,\ldots,A_{R-1}$ and fixes every class $B_j$ pointwise, whereas
		the second induces the permutation $\pi^{-1}$ on the classes
		$B_0,\ldots,B_{R-1}$ and fixes every class $A_j$ pointwise.
		
		Every transposition $\tau_{A_{i_\nu},A_{j_\nu}}$ is a horizontal
		class transposition of modulus $Rm$, and every
		$\tau_{B_{i_\nu},B_{j_\nu}}$ is a horizontal class transposition
		of modulus $Rn$. Both moduli are divisible by $R$, and all their
		odd prime divisors belong to $\mathcal P_1\cup\mathcal P_2$.
		Therefore, by Lemma~\ref{lem:horizontal}, all factors in
		\eqref{eq:horizontal-factorization} belong to
		$\bigl\langle\CT_{\mathcal P_1}(\Z),
		\CT_{\mathcal P_2}(\Z)\bigr\rangle$. Hence
		$V\in\bigl\langle\CT_{\mathcal P_1}(\Z),
		\CT_{\mathcal P_2}(\Z)\bigr\rangle$.
		
		Thus every generator of
		$\CT_{\mathcal P_1\cup\mathcal P_2}(\Z)$ belongs to
		$\bigl\langle\CT_{\mathcal P_1}(\Z),
		\CT_{\mathcal P_2}(\Z)\bigr\rangle$. Together with the evident
		reverse inclusion, this yields
		\[
		\bigl\langle
		\CT_{\mathcal P_1}(\Z),\CT_{\mathcal P_2}(\Z)
		\bigr\rangle=\CT_{\mathcal P_1\cup\mathcal P_2}(\Z),
		\]
		as required.
	\end{proof}
	
	\section*{Acknowledgements}
	
	The author thanks Stefan Kohl for checking the manuscript.
	
	\section*{Declaration of generative AI and AI-assisted technologies in the manuscript preparation process}
	
	During the preparation of this work, the author used ChatGPT-5.6 Sol to assist in drafting and checking parts of the technical arguments and in translating and editing the manuscript. Every AI-assisted mathematical argument was independently verified and revised by the author. The author made all final mathematical decisions and assumes full responsibility for the correctness and originality of the manuscript.

\end{document}